\newcommand{\tmvolxx}{18}
\newcommand{\tmyearyyyy}{2008}
\newcommand{\FirstPageHead}[3]{
{\footnotesize
\vskip -8mm
\centerline{\href{http://wwwen.uni.lu/content/download/20665/255970/file/171411-UNICL-LIVRE_INT_WEB.pdf}%
{Travaux math\'ematiques}, \quad Volume #1 (#2), #3,\quad \copyright\  Universit\'e du Luxembourg}}
\vspace{-3mm}}
 \newtheorem{theorem}{Theorem}[section]
 \newtheorem{lemma}{Lemma}[section]
 \newtheorem{proposition}{Proposition}[section]
 \newtheorem{claim}{Claim}[section]
 \theoremstyle{definition}
 \newtheorem*{remarku}{Remark}
\begin{document}
\thispagestyle{empty}
\FirstPageHead{\tmvolxx}{\tmyearyyyy}{125--137}
\newtheorem*{remarksu}{Remarks}
\newcommand{\1}{{\scriptscriptstyle1}}
\newcommand{\D}{{\mathbf D_{\boldsymbol1}}}
\newcommand{\T}{{\tilde D}}
\newcommand{\bcdot}{{{\cdot}}}
\newcommand{\bx}{{\boldsymbol\chi}}
\newcommand{\bp}{{\boldsymbol\partial}}
\newcommand{\bu}{{\mathbf u}}
\newcommand{\bw}{{\mathbf u^{\boldsymbol\prime}}}
\newcommand{\du}{{\mathbf{\dot u}}}
\newcommand{\ddu}{{\mathbf{\ddot u}}}
\newcommand{\pr}[2]{{\mathbf{\left(#1\cdot#2\right)}}}
\newcommand{\pw}[2]{{\mathbf{\left(#1\wedge#2\right)}}}
\newcommand{\nw}[2]{{\mathbf{\|#1\wedge#2\|}}}
\newcommand{\bnu}{{\|\mathbf u\|}}
\newcommand{\pnu}[1]{{\|\mathbf u\|^{#1}}}
\newcommand{\msp}{{\mspace{1mu}}}
\newcommand{\ep}[2]{{\epsilon_{#1#2}}}
\newcommand{\dg}{{\sqrt{|\det[g_{ij}]|}\;}}
\newcommand{\bpr}{{{\boldsymbol'}}}
\newcommand{\by}{{{\boldsymbol\upsilon}}}
\newcommand{\bs}{{{\boldsymbol\sigma}}}
\newcommand{\bt}{{{\boldsymbol\theta}}}
\newcommand{\R}{{{\boldsymbol{\mathcal R}}}}
\markboth{Roman Matsyuk}{Variational problem in concircular geometry}
\bigskip
\bigskip
\begin{center}
\renewcommand{\thefootnote}{\fnsymbol{footnote}}
{\Large Second order variational problem and \\
2--dimensional concircular geometry}\footnote[1]{Received: November 20, 2007}\footnote[2]
{Research supported by the grant GA\v CR 201/06/0922 of the Czech Science Foundation}
 \end{center}
\bigskip
\centerline{{\large by  Roman Matsyuk}}
\vspace*{.7cm}
\begin{abstract}
It is proved that the set of geodesic circles in two dimensions may be given a variational
description and the explicit form of it is presented. In the limit case of
the Euclidean geometry a certain claim of uniqueness of such description
is proved. A formal notion of `spin' force is discovered as a by-product
of the variation procedure involving the acceleration.
\end{abstract}
\pagestyle{myheadings}
\section{Introduction}
The concircular geometry deals with geodesic circles in (pseudo)-Riemannian
space.
Geodesic circles in two dimensions are those curves in 2-dimensional
(pseudo)-Riemannian space who preserve the Frenet curvature along them. In
relativity theory this coincides with the definition of the uniformly
accelerated one-dimensional motion of a test particle. The ordinary
differential equation to govern such curves has order
three~\cite{matsyuk:b1}. Thus the Lagrange function should
involve second derivatives and, at the same time, it should depend linearly
on them.

Aiming at simplification of the exposition and of the accompanying notations, let us
agree not to be confused with such notions as vector or bivector norm
in pseudo-Riemannian geometry.
Although the outcome of present investigation lucidly concerns both the proper
Riemannian and the pseudo-Riemannian geometries, for the sake of prudence
one may restrict oneself to the case of proper Riemannian space, and it still
will remain evident, wherein the results will be valid in actually
 the pseudo-Riemannian framework as well. Thus hereinafter we shall somewhat
 vaguely use the terms {\it Riemannian\/} and {\it Euclidean,} keeping in mind
 that strictly speaking, some details of pure technical developments can in fact
 apply only to proper Riemannian case.

Consider the following Lagrange function in 2--dimensional
Euclidean space:
\begin{equation}\label{matsyuk:1}
L = L_{II} + L_{I} = {\frac{{\ep i j u^{i}\dot {u}^{j}}}{{{\left\| {{\rm {\bf
u}}} \right\|}^{3}}}} - m{\left\| {{\rm {\bf u}}} \right\|}\;,
\end{equation}
\noindent
with $\ep i j $ denoting the skew-symmetric covariant Levi--Civita symbol. The first
addend, $L_{II} $, is the so-called signed first Frenet curvature of a path.
Further in this contribution we show that the
expression~(\ref{matsyuk:1}) as a candidate for the Lagrange
function is very tightly defined by the conditions of the symmetry of
corresponding equation of motion and by the request that the Frenet
curvature be preserved along the extremal curves.

Formula~(\ref{matsyuk:1}) clearly suggests accepting same Lagrange
function also for general Riemannian case,
\begin{equation}\label{matsyuk:2}
L^{R} = k - m{\left\| {{\rm {\bf u}}} \right\|}\;.
\end{equation}
To prove the preservation of curvature~$k$ along
the extremals of~(\ref{matsyuk:2}) we need some further tools  as
introduced below.
\section{Means from higher order mechanics of\\ Ostrohrads\kern-.1em'kyj}
\subsection{Parametric homogeneity}
Let $T^{q}M = {\{ {x^{j},\;u^{j},\;\dot {u}^{j},\;\ddot {u}^{j},\dotsc,
\overset{(q-\1)}u\msp^j}\}}$
denote the manifold of $q$\textsuperscript{th}-order Ehresmann velocities to the
base manifold $M$ of dimension $n$. The prolonged re\-pa\-ra\-me\-t\-rization group
$Gl_{n}^{q} = J_{0}^{q} \left( {\mathbb R,\,\mathbb R} \right)_{0} $ acts on the manifold
$T^{q}M = J_{0}^{q} \left( {\mathbb R,\,M} \right)$ by composition of jets
(in our case $n = 2$). As far as the Lagrange function~(\ref{matsyuk:2}) depends on the
derivatives of at most second order, it lives on the space $T^{2}M$. The infinitesimal counterpart of
the above mentioned parameter transformations of $T^{2}M$ (we put $q=2$) is given by so-called fundamental fields
(for arbitrary order
consult~\cite{matsyuk:b2,matsyuk:b3}):
\begin{equation*}
\zeta _{1} = u^{i}{\frac{{\partial} }{{\partial u^{i}}}} + 2\,\dot
{u}^{i}{\frac{{\partial} }{{\partial \dot {u}^{i}}}}\,,\quad \zeta _{2} =
u^{i}{\frac{{\partial} }{{\partial \dot {u}^{i}}}}\;.
\end{equation*}

If a function $F$ defined on $T^{2}M$ does not change under arbitrary
parameter transformations discussed above, then it with necessity satisfies
the following sufficient conditions:
\begin{equation}\label{matsyuk:3}
\zeta _{1} F = 0\,,\quad \zeta _{2} F = 0\,.
\end{equation}

On the other hand,
if a function $L$ on $T^{2}M$ defines a parameter-independent autonomous
variational problem with the action functional
\[
\int {L{\kern 1pt} \left( {x^{j},\;u^{j},\;\dot {u}^{j}} \right)}
\;d\varsigma \;,
\]
\noindent
then it also with necessity satisfies the so-called Zermelo sufficient
conditions~\cite{matsyuk:b4,matsyuk:b4A}:
\begin{equation}\label{matsyuk:4}
\zeta _{1} L = L\,,\quad \zeta _{2} L = 0\,.
\end{equation}

The generalized momenta are being conventionally introduced by the next
expressions:
\[
p_{i}^{\left( {2} \right)} = {\frac{{\partial L}}{{\partial \dot
{u}^{i}}}}\,,\quad p_{i}^{\left( {1} \right)} = {\frac{{\partial
L}}{{\partial u^{i}}}} - {\frac{{d}}{{d{\kern 1pt} \varsigma
}}}p_{i}^{\left( {2} \right)} \,,
\]
\noindent
while the Hamilton function reads:
\[
H = p_{i}^{\left( {2} \right)} \dot {u}^{i} + p_{i}^{\left( {1} \right)}
u^{i} - L\,.
\]
This Hamilton function may also be expressed in different
way~\cite{matsyuk:b3,matsyuk:b5}:
\begin{equation}\label{matsyuk:5}
H = \zeta _{1} L + {\frac{{d}}{{d{\kern 1pt} \varsigma} }}\zeta _{2} L -
L\,.
\end{equation}
As the Hamilton function is a constant of motion, from
(\ref{matsyuk:3}), (\ref{matsyuk:4}), and
(\ref{matsyuk:5}) we immediately obtain the following
proposition:
\begin{proposition}\label{matsyuk:L II} Let a function $L_{II} $ be parameter-independent, and
let another function $L_{II} $ define a parameter-independent variational
problem on $T^{2}M$. Then $L_{II}$ is constant along the extremals of the
variational problem, defined by the Lagrange function
\begin{equation}\label{matsyuk:6}
L = L_{II} + L_{I} {\rm .}
\end{equation}
\end{proposition}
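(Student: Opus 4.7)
The plan is to combine the Hamilton-function identity~(\ref{matsyuk:5}) with the algebraic assumptions on the two summands of $L$. Reading the statement in the only way consistent with the preceding discussion, one hypothesis asserts that $L_{II}$ is parameter-independent, so that by~(\ref{matsyuk:3}) one has $\zeta_1 L_{II} = 0$ and $\zeta_2 L_{II} = 0$; the other hypothesis asserts that $L_I$ defines a parameter-independent variational problem, so that by the Zermelo conditions~(\ref{matsyuk:4}) one has $\zeta_1 L_I = L_I$ and $\zeta_2 L_I = 0$.

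Using the linearity of $\zeta_1$, $\zeta_2$ and of $d/d\varsigma$, I would substitute $L = L_{II} + L_I$ into the right-hand side of~(\ref{matsyuk:5}). The $\zeta_2$ piece vanishes identically, because both $\zeta_2 L_{II}$ and $\zeta_2 L_I$ are zero; the $\zeta_1$ piece collapses to $L_I$; subtracting $L$ then leaves $H = -L_{II}$.

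To finish, it remains only to invoke the classical fact that, for an autonomous second-order variational problem of Ostrohrads\kern-.1em'kyj type on $T^2M$, the Hamilton function $H$ is a first integral of the Euler--Lagrange equations. Combined with the identity $H = -L_{II}$ just obtained, this forces $L_{II}$ to be constant along every extremal of the variational problem defined by $L$. I do not foresee any serious obstacle: the hypotheses are tailor-made so that the $L_I$-contributions in~(\ref{matsyuk:5}) cancel, and the only non-algebraic input is the conservation of $H$, which I would simply quote from~\cite{matsyuk:b3,matsyuk:b5} rather than rederive.
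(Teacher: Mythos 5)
Your proposal is correct and follows exactly the paper's own (very terse) argument: reading the evident typo in the statement as you do, applying~(\ref{matsyuk:3}) to $L_{II}$ and the Zermelo conditions~(\ref{matsyuk:4}) to $L_{I}$, substituting into~(\ref{matsyuk:5}) to get $H=-L_{II}$, and then invoking conservation of the Ostrohrads\kern-.1em'kyj Hamilton function. Nothing is missing.
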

\noindent This holds because $L_{II} = - H$ with $H$ corresponding to~(\ref{matsyuk:6}).

Frenet curvature is constant along the extremals
of~(\ref{matsyuk:2}), so by the Proposition~\ref{matsyuk:L II} we have
right to state:
\begin{claim}[\cite{matsyuk:b4B,matsyuk:b4C}]
The Lagrange function~(\ref{matsyuk:2}) constitutes the variational principle
for the geodesic circles.
\end{claim}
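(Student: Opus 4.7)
The plan is to invoke Proposition~\ref{matsyuk:L II} with the decomposition $L^R = L_{II} + L_I$, taking $L_{II}:=k$ for the Frenet curvature and $L_I:=-m\|\mathbf{u}\|$, so that two hypotheses need to be checked.

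First I would verify that $k$ is a parameter-independent function on $T^2M$, i.e.\ satisfies~(\ref{matsyuk:3}). In Riemannian coordinates one may write $k=\eta_{ij}\,u^i(\dot u^j+\Gamma^j_{kl}u^ku^l)/\|\mathbf{u}\|^3$, where $\eta_{ij}$ denotes the Riemannian area form, reducing to~(\ref{matsyuk:1}) in Euclidean coordinates. Every monomial in the numerator carries weight $3$ under $\zeta_1$ (with $u$ of weight $1$ and $\dot u$ of weight $2$), matching the weight of the denominator $\|\mathbf{u}\|^3$, so $\zeta_1 k=0$. The only $\dot u$-dependent piece of the numerator, $\eta_{ij}u^i\dot u^j$, is annihilated by $\zeta_2$ through antisymmetry of $\eta_{ij}$, while the Christoffel piece and the denominator are $\dot u$-independent; hence $\zeta_2 k=0$. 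This is the coordinate reflection of the geometric fact that $k$ depends only on the oriented image of the curve.

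Second, I would check that $L_I=-m\|\mathbf{u}\|$ satisfies Zermelo's conditions~(\ref{matsyuk:4}). Since $\|\mathbf{u}\|$ is $\dot u$-independent and positively homogeneous of degree one in $u$, both $\zeta_1 L_I=L_I$ and $\zeta_2 L_I=0$ hold; equivalently, $\int\|\mathbf{u}\|\,d\varsigma$ is the parameter-invariant arc-length. With both hypotheses in place, Proposition~\ref{matsyuk:L II} immediately yields $k=-H$, so $k$ is a constant of motion along every extremal of $L^R$, making each extremal a curve of constant Frenet curvature, that is, a geodesic circle.

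To justify the word \emph{principle} in the claim, I would finally observe that $L^R$ is affine in $\ddot x$, so that the highest-derivative contribution in the Euler--Lagrange expression comes only through $(d^2/d\varsigma^2)\partial L^R/\partial \dot u^i$; consequently the Euler--Lagrange equations form a third-order system in $x$, matching the order of the defining ODE $dk/d\varsigma=0$ for geodesic circles. A dimension count of initial data then confirms that the extremals of $L^R$ exhaust the family of geodesic circles. The main technical step is the weight bookkeeping for $k$ in the Riemannian setting, where the Christoffel contributions to $\nabla_{\mathbf{u}}\mathbf{u}$ must be correctly weighted; once that is organized, both invariance conditions fall out, and everything else rests directly on the cited proposition and on the parameter-invariance of arc-length.
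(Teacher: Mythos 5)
Your first two paragraphs reproduce the paper's own argument: the claim rests on Proposition~\ref{matsyuk:L II} applied to the splitting $L^R=L_{II}+L_I$ with $L_{II}=k$ and $L_I=-m\left\|\mathbf{u}\right\|$, the point being that $k$ satisfies the parameter-independence conditions~(\ref{matsyuk:3}), $-m\left\|\mathbf{u}\right\|$ satisfies the Zermelo conditions~(\ref{matsyuk:4}), and hence by~(\ref{matsyuk:5}) the conserved Hamilton function is $H=-k$, so every extremal has constant Frenet curvature. Your weight bookkeeping for $\zeta_1 k=0$ and $\zeta_2 k=0$ (antisymmetry of the area form killing the $\zeta_2$ term) is a correct, and welcome, explicit check of a hypothesis the paper leaves tacit.

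The gap is in your final paragraph. That the Euler--Poisson system is third order and that $dk/d\varsigma=0$ is third order does not, together with ``a dimension count of initial data,'' show that the extremals \emph{exhaust} the geodesic circles. The extremals form a six-parameter family of parametrized curves, the geodesic circles a four-parameter family of unparametrized curves, and counting parameters cannot rule out that the projection of the first family misses part of the second (say, certain curvature values or directions) while having larger fibres elsewhere; surjectivity is exactly what must be proved, not inferred from dimensions. The paper explicitly isolates this as a separate task (``It remains to prove that every geodesic circle may be given a consistent parametrization, which makes it an extremal\dots'') and settles it in Section~\ref{matsyuk:Completeness}: one derives the parameter-homogeneous governing equation~(\ref{matsyuk:geocircles}) of geodesic circles, adjoins the scalar condition~(\ref{matsyuk:Parameter}) that fixes the parametrization along the curve, and verifies by direct substitution, using~(\ref{matsyuk:uR=0}), that the Euler--Poisson expression~(\ref{matsyuk:Myeq}) then vanishes identically. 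Some such construction of an admissible parametrization is indispensable; your proposal as written does not supply it.
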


Now we wish to provide evidence that
in the limit case of Euclidean space the corresponding Euler-Poisson
equation may be specified by means of symmetry considerations together with
the curvature preservation requirement. This means that the inverse
variational problem tools should be applied.

\subsection{The generalized Helmholtz conditions and \\ symmetry.}
Following Tulczyjew (see~\cite{matsyuk:b6,matsyuk:b3}),
let us
introduce some operators, acting in the graded algebra of differential forms who live on
manifolds $T^{q}M$ of varying order $q$ of jets:
\begin{enumerate}
\item The total derivative:
\[
d_{T} f = u^{i}{\frac{{\partial f}}{{\partial x^{i}}}} + \dot
{u}^{i}{\frac{{\partial f}}{{\partial u^{i}}}} + \ddot
{u}^{i}{\frac{{\partial f}}{{\partial \dot {u}^{i}}}}+\dotsb
+
{\overset{\,q} u}\msp^{i}{\frac{{\partial f}}{{\partial\;{\overset{q-\1}u}\msp^{i}}}}\,,
\quad dd_{T} = d_{T}
d\,;
\]
\item
For each of $r\le q$ the derivations of zero degree:
\begin{alignat*}{4}
 i_{0} ( {\omega}  ) &= {\rm d}{\rm e}{\rm g}( {\omega}  )\,\omega \,,
        & i_{r} ( {f} ) &= 0\,,\quad
        && i_{r} ( {dx^{i}} ) = 0\,,& \\
 i_{r} ( {d\,\overset{\,k} u}\msp^{i}  ) &= {\tfrac{{( {k + 1
                        } )\,{\rm !}}}{{( k-r+1 )\,{\rm !}}}}\,d\,\overset{\,k - r}u\msp^{i},\;\quad
        & i_{r} ( {d\,\overset{\,k} u}\msp^{i}  ) &= 0\,,
        &{\rm if}&\quad k < r-1\,;& \\
 \end{alignat*}
\item
The Lagrange derivative:
\[
\delta = \left( {i_{0} - d_{T} i_{1} + {\frac{{1}}{{2}}}\,d_{T}{}^{2}i_{2} -
{\frac{{1}}{{6}}}\,d_{T}{}^{3}i_{3}}+\dotsb+\dfrac{(-1)^q}{q!}\,d_{T}{}^{q}i_{q}  \right)d\,.
\]
\end{enumerate}
It is of common knowledge that the Euler--Poisson expressions constitute
a covariant object.
\begin{lemma}[\cite{matsyuk:b6}] Let a system of some differential expressions of the
third order form a covariant
object---the differential one-form
\begin{equation}\label{matsyuk:7}
\varepsilon = E_{i} \left( {x^{j},\,u^{j},\,\dot {u}^{j},\,\ddot {u}^{j}}
\right)\,dx^{i}.
\end{equation}
Then $\varepsilon = \delta \left( {L} \right)$ for some (local) $L$ if and
only if
\begin{equation}\label{matsyuk:8}
\delta \left( {\varepsilon}  \right) = 0\,.
\end{equation}
\end{lemma}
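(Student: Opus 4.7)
The plan is to prove the two implications of the lemma by different methods. Necessity, namely $\varepsilon=\delta(L)\Longrightarrow\delta(\varepsilon)=0$, amounts to the identity $\delta\circ\delta=0$ on zero-forms and rests on the algebraic interplay of $d$, $d_T$ and the derivations $i_r$. Sufficiency is the substantive direction and will be established by exhibiting a local Lagrangian via a Tonti--Vainberg homotopy along the fibres of $T^{q}M\to M$.

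For the necessity I would first collect the commutation identities satisfied by the operators in question: $dd_T=d_Td$ is postulated; each $i_r$ is of degree zero and hence a derivation; and the action of $i_r$ on the generators $d\,\overset{\,k}u\msp^{i}$ given in the excerpt yields an explicit formula for $[i_r,d_T]$. Substituting these into $\delta\bigl(\delta(L)\bigr)$ and expanding, the composite terms of the shape $d_T^{\,s}i_{s}\,d\,d_T^{\,t}i_{t}\,dL$ can be reorganized by successive commutations, and the binomial prefactors $(-1)^{s}/s!$ in the definition of $\delta$ are tuned precisely so that the resulting contributions cancel in pairs. Applied to an $L$ with $\delta(L)=\varepsilon$ this yields $\delta(\varepsilon)=0$.

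For the sufficiency I would work in a local chart where the fibre of $T^{q}M\to M$ is star-shaped about the zero jet and introduce the Tonti--Vainberg primitive obtained by integrating $\varepsilon$ along a ray in the fibre,
\[
L \;=\; u^{i}\!\int_{0}^{1}\!E_{i}\bigl(x^{j},\,\tau u^{j},\,\tau\dot u^{j},\,\tau\ddot u^{j}\bigr)\,d\tau .
\]
Computing $\delta(L)$ by differentiating under the integral sign and integrating by parts in $\tau$, I would rewrite the result in the form $\int_{0}^{1}\!\frac{d}{d\tau}\bigl[\tau\, E_{i}(x,\tau u,\tau\dot u,\tau\ddot u)\bigr]\,d\tau\;dx^{i}$, an identity that is valid precisely when $\delta(\varepsilon)=0$, since the closedness condition supplies the symmetry relations among $\partial E_{i}/\partial u^{j}$, $\partial E_{i}/\partial\dot u^{j}$ and $\partial E_{i}/\partial\ddot u^{j}$ needed to collapse the integrand into an exact $\tau$-derivative. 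Evaluating the $\tau$-integral then recovers $E_{i}\,dx^{i}=\varepsilon$.

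The main obstacle I anticipate lies in the bookkeeping of the sufficiency direction. Because $\delta$ intertwines the total derivative $d_T$ with the fibre contractions $i_{1}$ and $i_{2}$, the terms produced upon differentiating the Tonti integrand mix inner variations in $\tau$ with total $\varsigma$-derivatives, and a careful match is required between the coefficients of $\ddot u^{j}$ and $\dddot u^{j}$ arising from the two sources. Once the Helmholtz conditions $\delta(\varepsilon)=0$ are inserted, those cross-terms line up, the homotopy collapses, and the $\tau=1$ boundary term reassembles as $\varepsilon$, concluding the argument.
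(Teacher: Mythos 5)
The paper does not actually prove this lemma; it is quoted from Tulczyjew \cite{matsyuk:b6}, so there is no internal proof to compare against and your attempt must stand on its own. Your necessity half is the right statement --- it reduces to the operator identity $\delta\circ\delta=0$ --- and the plan of expanding $\delta(\delta L)$ via the commutators $[i_r,d_T]$ read off from the action of $i_r$ on the generators $d\,u^i,\;d\,\dot u^i,\;d\,\ddot u^i$, with the factors $(-1)^s/s!$ forcing pairwise cancellation, is the standard computation; it is only sketched, but nothing in it would fail.

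The sufficiency half contains a genuine error: the homotopy is taken along the wrong rays. In this setting the dependent variables of the variational problem are the $x^j$ themselves, while $u^j,\dot u^j,\ddot u^j$ are their successive derivatives, and the source form $\varepsilon=E_i\,dx^i$ is a one-form in the differentials $dx^i$ of the \emph{dependent} variables. The Vainberg--Tonti primitive must therefore contract $E_i$ with $x^i$ and scale the entire jet: $L=x^i\int_0^1E_i(\tau x,\tau u,\tau\dot u,\tau\ddot u)\,d\tau$. Your formula $L=u^i\int_0^1E_i(x,\tau u,\tau\dot u,\tau\ddot u)\,d\tau$, which fixes $x$ and scales only the fibre of $T^qM\to M$, is the contraction of $\varepsilon$ with the generator of a flow that does not move the $dx^i$ directions, and it degenerates. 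Concretely, for the free particle $E_i=-\delta_{ij}\dot u^j$ it yields $L=-\tfrac12\,\delta_{ij}u^i\dot u^j=-\tfrac14\,d_T\bigl(\delta_{ij}u^iu^j\bigr)$, a null Lagrangian with $\delta(L)=0\neq\varepsilon$; likewise for any $E_i=-\partial V/\partial x^i$ it yields the total derivative $-d_TV$. No amount of bookkeeping with the Helmholtz conditions will make such an integrand collapse to $\frac{d}{d\tau}\bigl[\tau E_i\bigr]$: the homotopy itself must be replaced by the ray $\tau\mapsto\tau x$ acting on the dependent variables and all their derivative coordinates before the remainder of your argument can go through.
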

Developing the criterion~(\ref{matsyuk:8}) amounts to
establishing a general pattern for the
expression~(\ref{matsyuk:7}),
\begin{equation}\label{matsyuk:E}
E_{i} = A_{ij} ( {x^{l},u^{l}} )\,\ddot {u}^{j}
+ {\dot {u}^{p}{\frac{{\partial} }{{\partial {u}^{p}}}}}
        A_{ij} ({x^{l},u^{l}} )\,\dot {u}^{j} + B_{ij} ( {x^{l},u^{l}} )\,\dot {u}^{j}
        + q_{i} ( {x^{l},u^{l}} ),
\end{equation}
\noindent
and to some generalized Helmholtz conditions~\cite{matsyuk:b5,matsyuk:b8A,matsyuk:b8},
 cast in the form of a system of partial
differential equations, imposed on the coefficients $A_{ij} = - A_{ji} $,
$B_{ij} $, and $q_{i} $:
\begin{subequations}
\begin{gather}
 \partial _{u^{{[ {i} }}} A_{{ {jl} ]}}=0 \notag\\
 2\,B{\kern 1pt} _{{\left[ {ij} \right]}} - 3\,\D A_{ij} = 0 \label{matsyuk:a} \\
 2\,\partial _{u^{{\left[ {i} \right.}}} B_{{\left. {j} \right]}\,l}
 -4\,\partial _{x^{{\left[ {i} \right.}}} A_{{\left. {j} \right]\,l}}
 +\partial _{x^{{ {l} }}} A_{{ {ij} }}
 + 2\,\D\partial _{u^{l}} A_{ij} = 0 \label{matsyuk:b}\\
 \partial _{u^{\left( {i} \right.}} q_{\left. {j} \right)} - \D B_{\left(
{ij} \right)} = 0 \notag\\
 2\,\partial _{u^{l}} \partial _{u^{{\left[ {i} \right.}}} q_{{\left. {j}
\right]}}
      -4\,\partial _{x^{{\left[ {i} \right.}}} B_{{\left. {j} \right]\,l}}
+ \D{}^{2}\partial _{u^{l}} A_{ij}
         +6\,\D \partial _{x^{{\left[ {i} \right.}}} A_{{\left. {jl} \right]}}= 0
         \notag\\
 4\,\partial _{x^{{\left[ {i} \right.}}} q_{{\left. {j} \right]}}
 -2\,\D \partial _{u^{{\left[ {i} \right.}}} q_{{\left. {j} \right]}}
 -\D{}^{3}A_{ij} = 0\,, \notag
 \end{gather}
 \end{subequations}
 where the notation $\D=u^p\partial_{x^p}$ was introduced.

The Euclidean symmetry means that everywhere on the submanifold $E$
defined by the system of equations $E_{l} = 0$ the shifted system $X
\left( {E_{l}}  \right)$ vanishes too, where $X$ denotes the prolonged
generator of (pseudo)-orthogonal transformations.
We denote this criterion as
\begin{equation}\label{matsyuk:10}
{\left. X {\left( {E_{l}}  \right)\;} \right|}_{E} = 0\,.
\end{equation}
That we tend to embrace nothing more but only the \textit{geodesic circles} as extremals, falls into
similar condition:
\begin{equation}\label{matsyuk:11}
{\left. {\left(d_{T} k\right)\,} \right|}_{E} = 0\,.
\end{equation}
\noindent As far as in two-dimensional space (${\rm d}{\rm i}{\rm m}\,M = 2$) the
skew-symmetric matrix $A_{ij}$ is invertible, it is not difficult to implement
conditions (\ref{matsyuk:10}) and~(\ref{matsyuk:11}).

If one wishes to include in the set of extremals all those Euclidean geodesics that refer to
the natural parameter, one
has to imply one more condition:
\begin{equation}\label{matsyuk:12}
{\left. {E_{l} \,} \right|}_{{\rm {\bf \dot u=0}}}\,.
\end{equation}

\begin{theorem}\label{matsyuk:unique} Let a third order autonomous dynamical equation
${\rm {\bf E}} = {\rm {\bf 0}}$ in two-dimensional space obey conditions:
\begin{enumerate}
\item
{{\hfil $
\delta ( {\varepsilon}  ) = 0;
$}}
\item
The system of ODEs ${\{ {E_{j} = 0} \}}$ possesses the
Euclidean symmetry;
\item\label{matsyuk:th3}
The system ${\left\{ {E_{j} = 0} \right\}}$ possesses the first
integral --- the Frenet curvature $k$, and includes all curves of constant
curvature as its solutions;
\item\label{matsyuk:th4}
It also includes the strait lines with natural parametrization, ${\rm {\bf
\dot {u}}} = {\rm {\bf 0}}$.
\end{enumerate}
Then
\[
E_{i} = {\frac{{\ep i j \ddot {u}^{j}}}{{{\left\| {{\rm {\bf u}}}
\right\|}^{3}}}} - 3\,{\frac{{\left( {{\rm {\bf \dot {u}}} \cdot {\rm {\bf
u}}} \right)}}{{{\left\| {{\rm {\bf u}}} \right\|}^{5}}}}\;\ep i j \dot {u}^{j} +
m\,{\frac{{{\left\| {{\rm {\bf u}}} \right\|}^{2}\dot {u}_{i} - \left( {{\rm
{\bf \dot {u}}} \cdot {\rm {\bf u}}} \right)u_{i}} }{{{\left\| {{\rm {\bf
u}}} \right\|}^{3}}}}\;.
\]
\end{theorem}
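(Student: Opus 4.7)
The plan is to begin from the general pattern~(\ref{matsyuk:E}) provided by the Helmholtz lemma together with the constraints (\ref{matsyuk:a}), (\ref{matsyuk:b}) (and the three further Helmholtz identities listed after them), then to peel off the structure of the tensors $A_{ij}$, $B_{ij}$, $q_i$ one hypothesis at a time. First I would use the translation part of Euclidean symmetry to kill all $x$-dependence in the three coefficient tensors, so that the operator $\D=u^p\partial_{x^p}$ annihilates each of them. The Helmholtz system then collapses to a purely $u$-dependent collection of algebraic and first-order $u$-differential relations, considerably simpler than the original.

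Next I would invoke rotational invariance. In two dimensions the tensors built out of a single vector $\mathbf u$ that are invariant under the connected Euclidean group are generated, up to scalar functions of $\|\mathbf u\|$, by $\delta_{ij}$, $\ep i j$, $u_iu_j$, $u_i$ and $\ep i j u^j$. Skew-symmetry of $A_{ij}$ forces
\[
A_{ij}=a(\|\mathbf u\|)\,\ep i j,
\]
while~(\ref{matsyuk:a}), now reading $B_{[ij]}=0$, makes $B_{ij}$ symmetric, whence
\[
B_{ij}=b_1(\|\mathbf u\|)\,\delta_{ij}+b_2(\|\mathbf u\|)\,u_iu_j.
\]
The Helmholtz identity $\partial_{u^{(i}}q_{j)}-\D B_{(ij)}=0$ together with invariance narrows $q_i$ down to $\beta\,\ep i j u^j$ with $\beta$ constant, and hypothesis~(\ref{matsyuk:th4}), evaluated at $\dot{\mathbf u}=\mathbf 0$ (which also forces $\ddot{\mathbf u}=\mathbf 0$), annihilates $q_i$ altogether, i.e.\ $\beta=0$.

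Only the three scalar functions $a,b_1,b_2$ of $\|\mathbf u\|$ then remain free. At this point I would invoke hypothesis~(\ref{matsyuk:th3}). Since $A_{ij}$ is invertible in two dimensions, the equation $\mathbf E=\mathbf 0$ solves explicitly for $\ddot{\mathbf u}$; substituting the result into the curvature preservation condition~(\ref{matsyuk:11}) applied to $k=\ep i j u^i\dot u^j/\|\mathbf u\|^3$ produces a polynomial identity in $\dot{\mathbf u}$ whose coefficients are ODEs in the single variable $\|\mathbf u\|$ for $a,b_1,b_2$. The supplementary requirement that every constant-curvature curve be a solution of $\mathbf E=\mathbf 0$ (the second half of hypothesis~(\ref{matsyuk:th3})) gives the last constraints, and cross-checking with the remaining higher Helmholtz identities picks out $a(\|\mathbf u\|)=\|\mathbf u\|^{-3}$ and determines $b_1,b_2$ up to one overall multiplicative constant, which is identified with $m$. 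Collecting the pieces reproduces the formula in the statement.

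I expect the most delicate step to be this last one, namely expanding~(\ref{matsyuk:11}) on the constraint surface $\mathbf E=\mathbf 0$ and correctly book-keeping the coefficients of the various monomials in $\dot{\mathbf u}$. Everything else is either a direct application of invariant theory in two dimensions or a mechanical use of the Helmholtz conditions simplified by the fact that $\D$ vanishes on $x$-independent functions.
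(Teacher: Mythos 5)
Your overall strategy (Helmholtz pattern~(\ref{matsyuk:E}), then symmetry, then the curvature first integral) matches the paper's, but there is a genuine gap at the very first step. Hypothesis~2 of the theorem is that the \emph{system of ODEs} $\{E_j=0\}$ possesses Euclidean symmetry, i.e.\ the on-shell tangency condition~(\ref{matsyuk:10}), ${X(E_l)|}_{E}=0$ --- not that the one-form $\varepsilon$ or its coefficient tensors are themselves invariant. Since the zero set determines $\mathbf E$ only up to an overall nonvanishing multiplier, translation symmetry of the system does \emph{not} ``kill all $x$-dependence in the three coefficient tensors'': it only yields the relative condition~(\ref{matsyuk:sym4}), $\bx.\bp_x B=\frac{\bx.\bp_x\alpha}{\alpha}\,B$, which leaves room for a common $x$-dependent factor in $A$ and $B$. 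Consequently you are not yet entitled to read the variationality condition~(\ref{matsyuk:a}) as $B_{[ij]}=0$; at that stage it reads $2\beta=3\,\bu.\bp_x\alpha$ with $\alpha=\pnu3 A_{12}$ possibly $x$-dependent. The paper closes this loop by combining the rotational part of the symmetry (which forces $\beta$ to be a rotational invariant) with $e^{ij}u_i\partial_{u^j}A=0$ to conclude $\bp_x\alpha=0$, and only then $\beta=0$ and, via~(\ref{matsyuk:sym4}), the $x$-independence of $B$. Your argument assumes the conclusion of this nontrivial step, so as written it proves the formula only under a strictly stronger invariance hypothesis than the theorem states.

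A secondary inaccuracy: the exponent in $A_{ij}=\pnu{-3}\ep i j$ cannot be ``picked out by the remaining higher Helmholtz identities'' --- a constant $A_{ij}=\ep i j$ satisfies every one of them. It is fixed by the curvature-preservation condition~(\ref{matsyuk:11}): after eliminating $\ddu$ on shell, the part of $d_Tk$ cubic in $\du$ gives the PDE $\pnu2(\du.\bp_u)A_{12}+3A_{12}\pr u\du=0$, whence $A_{12}=\alpha\pnu{-3}$, while the part linear in $\du$ gives $u^pB_{pl}=0$, which (together with the symmetry of $B$ established above) is what forces $B_{ij}=\mu(u_iu_j-\pr u u g_{ij})$; the function $\mu$ is then pinned down by the second Helmholtz condition~(\ref{matsyuk:b}) contracted with $u^i$. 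You mention substituting into~(\ref{matsyuk:11}), so the ingredients are present, but the division of labour between the curvature condition and the Helmholtz system needs to be made precise for the book-keeping you yourself flag as delicate to actually go through.
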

\noindent The Lagrange function is given by~(\ref{matsyuk:1}).
\vskip5\jot
\begin{remarksu}
\hfil\break
\vskip-5\jot
\begin{itemize}
\vskip-1\jot
\item
If, for instance, we took $L=k\sqrt{u_{i} u^{i\mathstrut}}$, then $H = 0$
for this Lagrange function, and the
Proposition~\ref{matsyuk:L II} wouldn't work.
\item Because of the non-degeneracy of the matrix
$A_{ij} $, there cannot exist a parameter-invariant variational problem in
two dimensions that would produce strictly the third order Euler--Poisson equation. But,
if we omit the first addend $k$ in~(\ref{matsyuk:2}), then what
remains defines the conventional parameter-invariant problem for the
Riemannian projective geodesic paths. So, what fixes the parameter along the
extremal in our case, is the Frenet curvature~$k$
in~(\ref{matsyuk:2}).
\end{itemize}
\bigskip
One should confer with
\cite{matsyuk:b9} and~\cite{matsyuk:b10} on these remarks.
\end{remarksu}
\subsection{Proof of the Theorem~\ref{matsyuk:unique}}
Before  passing to the proof of the above Theorem let us notice two simplification
formul{\ae}  which hold at specific occasion of two dimensions. Namely, for arbitrary
vectors $\mathbf a$, $\mathbf c$, $\mathbf v$, and $\mathbf w$ it is true that
\begin{equation}\label{matsyuk:*e}
\nw a c=\dg|\ep i ja^ic^j| \;\;\; \text{and}\;\;\; \nw a c\nw v w=\left|\pw a c\bcdot\pw v w\right|\,,
\end{equation}
where, as usual, $\pw a c\bcdot\pw v w=\pr a v\pr c w-\pr c v\pr a w$~\cite{matsyuk:Dieudonne}.
Also, let us agree to postpone the proof of the second part of statement~\ref{matsyuk:th3}
of Theorem~\ref{matsyuk:unique} until more general Riemannian case proved
in Section~\ref{matsyuk:Completeness}.
\begin{proof}[Proof of the necessity implication of Theorem~\ref{matsyuk:unique} assumptions]
In order to meet the condition~\ref{matsyuk:th4} of the Theorem~\ref{matsyuk:unique}
in the form~(\ref{matsyuk:12}), we have to remove the array $\mathbf q$ from~(\ref{matsyuk:E}).
Next we write down the first part of the statement~\ref{matsyuk:th3} given
by means of~(\ref{matsyuk:11}). Starting with the expression
\begin{equation}\label{matsyuk:k}
k=\dfrac{\nw u{\dot u}}{\pnu 3}
\end{equation}
of the Frenet curvature we substitute~$\ddu$ in
\[
d_Tk=\dfrac{\pw u{\du}\bcdot\pw u{\ddu}}{\pnu 3\nw u{\du}}
-3\,\dfrac{\nw u{\du}\pr u{\du}}{\pnu5}
\]
by $\ddu=-A^{-1}(\du.\boldsymbol{\partial}_{u})\,A\du-A^{-1}B\du$ of~(\ref{matsyuk:E})
and then split the expression~(\ref{matsyuk:11}) by the powers of~$\du$ to
obtain separately
\begin{subequations}
\begin{gather}\label{matsyuk:u3}
 \pr u u \pw u{\du}\;\bcdot\left(\bu\wedge\left(A^{-1}(\du.\boldsymbol\partial_u)A\du\right)\right)
 +3\,\pr u{\du}\nw u{\du}^2=0\\
\label{matsyuk:u2} \pw u{\du}\;\bcdot\left(\bu\wedge\left(A^{-1}B\du\right)\right)=0\,.
\end{gather}
\end{subequations}
Let us recall that the covariant and the contravariant Levi--Civita symbols are
related by $\ep i je^{jl}=-\delta_i{}^l$ and also let matrix~$A$ be expressed
as $A_{ij}=A_{12}\ep i j$. With these agreements the first addend in~(\ref{matsyuk:u3})
becomes
\[
\dfrac{1}{A_{12}}\pnu2\nw u{\du}^2(\du.\boldsymbol\partial_u)A_{12}\,,
\]
thus reducing~(\ref{matsyuk:u3}) by means of~(\ref{matsyuk:*e}) to the partial differential equation
\[
\pnu2(\du.\boldsymbol\partial_u)A_{12}+3\,A_{12}\pr u\du=0
\]
that in turn yields the solution
\[
A_{12}=\alpha\pnu{-3}.
\]
Now we see that matrix~$A$ satisfies the relations
\begin{equation}\label{matsyuk:dA=3A}
\du.\bp_u\,A=-3\,\dfrac{\bu\bcdot\du\;}{\pnu2}\,A\,,
\end{equation}
and, evidently,
\begin{equation}\label{matsyuk:rotA=0}
 e^{ij}u_i\frac{\partial }{\partial u^j}\,A=0\,,
\end{equation}
 with the help of which the Euler--Poisson expression~(\ref{matsyuk:E}) becomes
\begin{equation}\label{matsyuk:new E}
 \mathbf E=A\ddu-3\,\dfrac{\bu\bcdot\du\;}{\pnu2}A\du+B\du\,,
\end{equation}
 so that the submanifold $\mathbf{E=0}$ is now defined by the equation
\begin{equation}\label{matsyuk:E=0}
\ddu=3\,\dfrac{\bu\bcdot\du\;}{\pnu2}A^{-1}B\du\,.
\end{equation}

Again with the help of~(\ref{matsyuk:*e}) the equation~(\ref{matsyuk:u2})
takes the shape
\[
\left\|\bu\wedge \left(A^{-1}B\du\right)\right\|=0,\quad\text{or}\quad\ep i je^{jp}B_{pl}u^i\dot u^l=0\,,
\]
{from} where it follows that
\begin{equation}\label{matsyuk:uB}
u^pB_{pl}=0.
\end{equation}

The generators of the Euclidean transformations are enumerated by an arbitrary constant $\varpi$
and an arbitrary constant array~$\bx=\{\chi^i\}$ and they read:
\begin{subequations}
\begin{gather}\label{matsyuk:chi}
        \bx.\bp_x\left(\equiv\chi^i\frac{\partial }{\partial x^i}\right)\,;\\
        \label{matsyuk:varpi}
\varpi e^{ij}\left(x_i\dfrac{\partial}{\partial x^j}
        +u_i\dfrac{\partial}{\partial u^j}
        +\dot u_i\dfrac{\partial}{\partial \dot u^j}
        +\ddot u_i\dfrac{\partial}{\partial \ddot u^j}\right).
\end{gather}
\end{subequations}

Applying criterion~(\ref{matsyuk:10}) with $X=\bx.\bp_x$ and taking into account
the substitution~(\ref{matsyuk:E=0}) ends in
\begin{equation}\label{matsyuk:sym4}
-\dfrac{\bx.\bp_x\,\alpha}{\alpha}\,B\du+\bx.\bp_x\,B\du=\mathbf0.
\end{equation}

Applying criterion~(\ref{matsyuk:10}) with $X$ equal to~(\ref{matsyuk:varpi}) and again calling to mind
the substitution~(\ref{matsyuk:E=0}) with the help of
\[
A_{lj}e^{ij}A^{-1}{}_i{}^p=\frac 1{A_{12}}A^{-1}{}_l{}^p=-g_{il}e^{ip}
\]
ends in
\[
g_{ij}e^{il}B_{lp}\dot u^p+e^{il}u_i\frac{\partial }{\partial u^l}\,B_{jp}\dot u^p
+e^{il}B_{jl}\dot u_i=0\,,\quad\text{identically with respect to $\dot u^p$},
\]
{from} where we conclude:
\begin{equation}\label{matsyuk:sym*}
e^{il}u_i\frac{\partial }{\partial u^l}\,B_{jp}+g_{ij}e^{il}B_{lp}
+g_{ip}e^{il}B_{jl}=0\,.
\end{equation}
We may deduce {from}~(\ref{matsyuk:sym*}) that the skew-symmetric part
of~$B$ should satisfy the equation:
\begin{equation}\label{matsyuk:sym**}
e^{ij}u_i\,\frac{\partial }{\partial u^j}\,B_{[lp]}
+g_{il}e^{ij}B_{[jp]}+g_{ip}e^{ij}B_{[lj]}=0.
\end{equation}
Let the skew-symmetric part of matrix~$B$ be presented as~$\beta\ep i j$.
Then equation~(\ref{matsyuk:sym**}) confirms that~$\beta$ should be
a differential invariant:
\begin{equation}\label{matsyuk:sym beta}
e^{ij}u_i\,\frac{\partial }{\partial u_j}\,\beta=0.
\end{equation}
But the variationality condition~(\ref{matsyuk:a}) now says:
\begin{equation}\label{matsyuk:sym a}
2\beta=3\,\bu.\bp_x\alpha\,.
\end{equation}
Applying the left hand side operator of~(\ref{matsyuk:sym beta}) to~(\ref{matsyuk:sym a})
along with equation~(\ref{matsyuk:rotA=0}) produces
\[
\ep j ie^{ip}\,\frac{\partial }{\partial x^p}\,\alpha=0.
\]
Thus $\alpha$ does not depend on~$x^{i}$. Looking back at~(\ref{matsyuk:sym a}) immediately implies $\beta=0$,
 matrix~$B$ being symmetric thus.
 In addition, we see that matrix~$B$ also
should not depend on~$x^i$ by the reason of relation~(\ref{matsyuk:sym4}).

Now it is time to turn back to the constraint~(\ref{matsyuk:uB}).
Of course, we could have used it much
earlier, but we prefer to unleash it now. So, the two equations, contained
there, allow us to prescribe the shape to the matrix~$B$ as follows (independent
of its virtual symmetry). Let $B_{12}=b_1u_2$, $B_{21}=b_2u_1$. Then {from}~(\ref{matsyuk:uB})
one has:
\[
B_{ij}=b_iu_j-(\mathbf b\bcdot\bu)g_{ij}\,.
\]
But we already know that $B_{[ij]}=0$. This immediately implies that $\mathbf b$
and $\bu$ must be collinear,
$\mathbf b=\mu\,\bu$, thus suggesting the following form of matrix~$B$:
\begin{equation}\label{matsyuk:B}
B_{ij}=\mu\,\left(u_iu_j-\pr u u\,g_{ij}\right)\,.
\end{equation}
Let us again act on~(\ref{matsyuk:B}) with the operator $e^{ij}u_i\frac{\partial }{\partial u^j}$
and make use of~(\ref{matsyuk:sym*}). After some simplifications we get:
\[
e^{ij}u_i\frac{\partial }{\partial u^j}\,\mu=0\,,
\]
what suggests that~$\mu$ depends on $u^i$ exclusively via~$\bu\bcdot\bu$.

The definite step consists in applying the second valid variational criterion,
that of~(\ref{matsyuk:b}). It is efficient to make contraction of~(\ref{matsyuk:b})
with $u^i$ on the left and in meanwhile not to forget about the constraint~(\ref{matsyuk:uB}).
One obtains:
\[
u^i\frac{\partial }{\partial u^i}\,B_{jp}=-B_{jp}\,.
\]
Together with the guise~(\ref{matsyuk:B}) this produces
\[
\left(2\,\pr u u\,\frac{\partial \mu}{\partial \pnu2}+3\,\mu\right)
\left(u_iu_j-\pr u ug_{ij}\right)=0\,,
\]
what clearly has the solution $\mu=\frac{m}{\pr u u^{3/2}}$ and so says the finite appearance
of~$B$:
\[
B_{ij}=\dfrac{m}{\pr u u^{3/2}}\,\left(u_iu_j-\pr u u\,g_{ij}\right)\,.
\]
\end{proof}

\section{The variational description of geodesic circles}
\subsection{The variational equation}
Before calculating the variation of the integrand in the functional expression $\int k\,d\varsigma$ let us
agree on some basic formul\ae. If~$\upsilon$ denotes the infinitesimal shift
of the path $x^i(\varsigma)$ and if~$\T$ stands for the covariant differentiation operator
according to that shift,
then the covariant variation of any vector field $\xi$ along this path is given
by
\begin{equation}\label{matsyuk:Cov}
\langle\upsilon,\T\xi\rangle\msp^i=\langle\upsilon,d\xi^i\rangle+\Gamma^i_{lj}\msp\xi^j\upsilon^l.
\end{equation}
Let the covariant derivative of a vector field be notated by prime. And let us
introduce a special designation for the evaluation of Riemannian
curvature on velocities as follows:
\begin{equation*}
\sigma^l{}_j=R_{ji,p}{}^lu^iu^p.
\end{equation*}
The vector differential one-form $\bs=[\sigma^l{}_j]$ is semi-basic when
the projection $TM\to M$ is considered:
$\langle\upsilon,\bs\rangle\msp^l=\sigma^l{}_j\upsilon^j$. Let~$\bt$ denote the
vector one-form representing the identity: $\bt=[\delta^l{}_j]$.
Next formul{\ae}  replace then the usual interchange rule between infinitesimal
variation and ordinary differentiation:
\begin{equation}\label{matsyuk:sigma}
\begin{split}
\mathbf\T\bu&=\bt\msp\bpr\;\;\text{[this recapitulates definition~(\ref{matsyuk:Cov})]},\\
\mathbf\T(\bw)&=(\mathbf\T\bu)\msp\bpr-\bs\;\;\text{[this recapitulates
the definition of the tensor $R_{ji,p}{}^l$
]}.
\end{split}
\end{equation}

Further on we shall find escape from highly tangled and tedious calculations in the truth of the following
relation (valid in two dimensions only):
\begin{equation}\label{matsyuk:***e}
\pr a a\,\pw v c\bcdot\pw v c-\pr a v\,\pw v c\bcdot\pw a c+\pr a c\,\pw v c\bcdot\pw a v=0\,,
\end{equation}
along with the simplification formul\ae~(\ref{matsyuk:*e}).

The above formal and highly symbolic notations
 save place and time and help to avoid unessential calculative details, whereas
keeping the skeleton of the variational procedure untouched and
faithfully tracing the logical outlines of our development as well as producing the correct final result.

With these prerequisites we calculate the covariant variation of the Frenet
curvature~(\ref{matsyuk:k}), discarding terms which present total
covariant derivatives:
\begin{align*}
\T k&=\dfrac{\pw u\bw\bcdot\pw{\T u}\bw}{\pnu3\nw u\bw}
        -3\,\dfrac{\nw u\bw}{\pnu5}\,\pr u{\T u}
        +\dfrac{\pw u\bw\bcdot\pw u{\T\bw}}{\pnu3\nw u\bw} = \\
  \intertext{\endgraf [by (\ref{matsyuk:*e}), (\ref{matsyuk:sigma}), and Leibniz rule]}
\begin{split}
      = 2\,\dfrac{\nw{\T u}\bw}{\pnu3}-3\,\dfrac{\nw u\bw}{\pnu5}\,\pr u{\T u}
     -3\,\dfrac{\nw{\T u}u}{\pnu5}\,\pr u\bw\\
     -\dfrac{\pw u\bw\bcdot\pw u\bs}{\pnu3\nw u\bw}
\end{split}\\
&\begin{alignedat}{2}
     &=-\dfrac{\nw{\T u}\bw}{\pnu3}
     -\dfrac{\pw u\bw\bcdot\pw u\bs}{\pnu3\nw u\bw}
        &\;\;&
          \text{[by~(\ref{matsyuk:***e})]} \\
      &=\dfrac{\nw\bt{\bw\bpr}}{\pnu3}
      -3\,\dfrac{\nw\bt{\bw}}{\pnu5}\,\pr u\bw
     -\dfrac{\pw u\bw\bcdot\pw u\bs}{\pnu3\nw u\bw}
        &&
        \text{[by Leibniz rule again].}
\end{alignedat}
\end{align*}
Let us introduce one more succinct notation:
\[
\boxed{\mathcal R_j=\dfrac{\dg}{\pnu3}\,\ep i l R_{jn,p}{}^lu^iu^pu^n}
\]
The relation between this scalar semi-basic one-form $\mathcal R_jdx^j$ and
previously introduced vector semi-basic one form $\sigma^i{}_jdx^j$ is obvious:
\[
\dg\dfrac{\ep i lu^i\sigma^l{}_j}{\pnu3}=\mathcal R_j.
\]
Both quantities satisfy the constraint imposed on the contraction with velocity:
\begin{equation}\label{matsyuk:uR=0}
\mathcal R_ju^j=0\,,
\end{equation}
along with
\begin{equation}\label{matsyuk:usigma=0}
u_i\sigma^i{}_j=0\,.
\end{equation}
Now the Euler--Poisson equation for the complete Lagrange function~(\ref{matsyuk:2})
may be expressed in the form, valid in each case of different signature
of metric tensor~$g_{ij}$ with the help of Hodge star operator:
\begin{equation}\label{matsyuk:Myeq}
\boxed{\mathbf E^R=-\dfrac{*\,\bw\bpr}{\pnu3}+3\,\dfrac{\pr u\bw}{\pnu5}\,*\bw
        +m\,\dfrac{\pr u u\,\bw-\pr\bw u\,\bu}{\pnu3}-\R=\boldsymbol0}
\end{equation}
\begin{remarku}
The force~$\R$ may be given another shape thanks to the relation~(\ref{matsyuk:usigma=0}):
\[
\mathcal R_ldx^l=\dfrac{\pw u\bw\bcdot\pw u\bs}{\pnu3\nw u\bw}=\dfrac{\bs\bcdot\bw}{\bnu\nw u\bw}
=\dfrac{1}{2}\,R_{lj,pi}u^jS^{pi}dx^l\,,
\]
where $S^{pi}=\dfrac{\pw u\bw\msp^{pi}}{\bnu\nw u\bw}$ is a formally introduced
`spin' tensor.
\end{remarku}
\subsection{\label{matsyuk:Completeness}Completeness of variational description of geodesic circles}
It remains to prove that every geodesic circle may be given a consistent
pa\-ra\-me\-tri\-zation, which makes it an extremal of the variational problem with the Lagrange
function~(\ref{matsyuk:2}).

\smallskip
\noindent\textit{The governing equation for the geodesic circles.}
With the intention to derive a dynamical differential equation, governing the
motion along a geodesic path,
we put equal to zero the derivative of the Frenet curvature function~$k$
in terms of natural parametrization by $ds=\sqrt{u_iu^{i\mathstrut}}\,d\varsigma$:
\begin{equation}\label{matsyuk:k'=0}
\bw_s\bcdot\bw\bpr_s=0.
\end{equation}
To it we add the obvious constraint
\begin{equation}\label{matsyuk:u'u'=0}
\bw_s\bcdot\bw_s+\bu_s\bcdot\bw\bpr_s=0,
\end{equation}
which merely presents the differential consequence of
\begin{equation}\label{matsyuk:uu'=0}
\bu_s\bcdot\bw_s=0.
\end{equation}
Next we solve  the system of equations (\ref{matsyuk:k'=0}) and (\ref{matsyuk:u'u'=0})
for~$\bw\bpr_s$ to obtain
\begin{equation}\label{matsyuk:u''1}
(u''_s)_l=\frac{\ep l i(u'_s)^i}{\ep i j(u'_s)^i(u_s)^j}\,\bw_s\bcdot\bw_s\,.
\end{equation}
We leave it to the Reader to check with the help of~(\ref{matsyuk:uu'=0}) and
 of $\bu_s\bcdot\bu_s=1$, that in two--dimensional space 
 the relation $\ep i l(u'_s)^i=(u_s)_l\ep i j(u'_s)^i(u_s)^j$ 
 holds that reduces 
  equation~(\ref{matsyuk:u''1})
 to the well known governing equation of geodesic circles
\begin{equation}\label{matsyuk:u''2}
 \bw\bpr_s+(\bw_s\bcdot\bw_s)\,\bu_s=0.
\end{equation}
In order to dispense with the constraint $\bu_s\bcdot\bu_s=1$ we recalculate
the derivatives in~(\ref{matsyuk:u''2}) by the reparametrization from~$s$
to an arbitrary elapse parameter~$\varsigma$ along the path of
a geodesic circle to
see at last  that
geodesic circles accept characterization as the integral curves of
the following parameter-homogeneous differential equation:
\begin{equation}\label{matsyuk:geocircles}
\dfrac{\bw\bpr}{\pnu3}=\dfrac{\bu\bcdot\bw\bpr}{\pnu5}\,\bu
        +3\,\dfrac{\bu\bcdot\bw}{\pnu5}\,\bw
        -3\,\dfrac{\pr u\bw^2}{\pnu7}\,\bu\,.
\end{equation}

\smallskip
\begin{proof}[Proof of the exhaustivenes of extremal set]
Let us complement equation~(\ref{matsyuk:geocircles}) by the following additional one, which is consistent
with the equation~(\ref{matsyuk:Myeq}) (as its consequence) and will play the role of the
 means to fix the way of parametrization along the extremal curve:
\begin{equation}\label{matsyuk:Parameter}
\dfrac{\bu\bcdot\bw\bpr}{\pnu3}
        -3\,\dfrac{\pr u\bw\msp^2}{\pnu5}
        =*\,\left(\dfrac{m}{\bnu}\;\bu\wedge\bw-\bu\wedge\R\right)\,.
\end{equation}
For the sake of efficiency, let us evaluate the Euler--Poisson expression~(\ref{matsyuk:Myeq})
on some arbitrary vector~$\by$:
\[
\mathbf E^R.\,\by=\dfrac{*\;\pw\by{\bw\bpr}}{\pnu3}
-3\,\dfrac{\bu\bcdot\bw}{\pnu5}\;\:*\;\pw\by\bw
+\dfrac{m}{\pnu3}\,\pw u\bw\bcdot\pw u\by-\R\,.\,\by\,.
\]
If now we substitute~$\bw\bpr$ in this equation with the expression {from}~(\ref{matsyuk:geocircles})
and simultaneously take into account the additional equation~(\ref{matsyuk:Parameter}),
we will get:
\begin{align*}
\begin{split}
\mathbf E^R.\,\by=-\dfrac{*\;\pw\by u\;*\;\pw u\R}{\pnu2}
&+\dfrac{m}{\pnu3}\;*\;\pw\by u\;*\;\pw u\bw\\
&+\dfrac{m}{\pnu3}\,\pw u\bw\bcdot\pw u\by-\R\,.\,\by\\
\end{split}\\
&\phantom{\mathbf E^R.\,\by}=-\dfrac{\pr\by u\,\pr u\R}{\pnu2}+\by\bcdot\R-\R\,.\,\by\equiv0
\end{align*}
because of~(\ref{matsyuk:uR=0})
\end{proof}
\subsection*{Acknowledgment}My participation in this excellent conference was made possible
exclusively due to the generosity of the organizes to whom my deepest gratitude
should and actually is being readily expressed here.

\noindent
       Roman Matsyuk \\
       Institute for Applied Problems in Mechanics and Mathematics, L\kern-.1em'viv, Ukraine\\
       15 Dudayev St., 79005 L\kern-.1em'viv, Ukraine\\
        matsyuk@lms.lviv.ua, romko.b.m@gmail.com\\
        \url{http://iapmm.lviv.ua/12/eng/files/st_files/matsyuk.htm}
\end{document}